\documentclass[a4paper,12pt]{article}

\usepackage[left=2cm,right=2cm, top=2cm,bottom=2cm,bindingoffset=0cm]{geometry}

\usepackage[russian,english]{babel}
\usepackage{amsthm}
\usepackage{amsmath}
\usepackage{amssymb}
\usepackage{cite}

\usepackage{graphicx}

\newtheorem{thm}{Theorem}
\newtheorem{lemma}{Lemma}
\newtheorem{statement}{Proposition}
\theoremstyle{definition}
\newtheorem{ip}{Inverse problem}
\newtheorem{alg}{Algorithm}
\newtheorem{remark}{Remark}

\begin{document}

\begin{center} \bf \Large
On recovering quadratic pencils  with singular coefficients and 
entire functions in the boundary conditions
\end{center}

\begin{center}
Maria Kuznetsova \\
Saratov State University (SSU) \\
email: kuznetsovama@info.sgu.ru
\end{center}

{\bf Abstract.}  In the paper, we study an inverse spectral problem for quadratic pencils of the Sturm--Liouville operators with singular coefficients and entire functions in the boundary conditions. We prove that a subspectrum is sufficient for recovering the pencil if this subspectrum generates a complete functional system. As well, we obtain an algorithm solving the inverse problem and alternative conditions on the subspectrum. Finally, these results are applied to studying a partial inverse problem.

{\it Keywords:} inverse spectral problems, differential pencils, analytical dependence on the spectral parameter, singular coefficients, uniqueness theorem, partial inverse problems.

{ \it 2010 Mathematics Subject Classification:} 34A55, 34B05, 34B24.
%
%

\section{Introduction}
In this paper, we study an inverse spectral problem of recovering the coefficients in the quadratic differential pencil
\begin{equation} \label{pencil}
-y'' + q(x)y + 2\lambda p(x)y = \lambda^2 y, \quad x \in (0, \pi),
\end{equation}
where $p \in L_2(0, \pi)$ and $q \in W^{-1}_2(0, \pi).$ 
The latter means $q = \sigma'$ for some $\sigma \in L_2(0, \pi)$ in the sense of distributions. In particular, the class $W^{-1}_2$ contains the
Dirac delta-functions and the Coulomb-type singularities $\frac1x,$  which frequently appear in quantum mechanics~\cite{gesz}. Some results in inverse spectral problems for operators with singular coefficients were obtained in the works~\cite{sav,myk, manko,hryn1,pron2,bon-gaidel,higher,ignatiev, guliev, eckhardt}.

Further, we study equation~\eqref{pencil} by means of the so-called regularization approach~\cite{sav,myk, manko,hryn1,pron2,bon-gaidel,higher,ignatiev, guliev, eckhardt}.
Introducing the quasi-derivative $y^{[1]}= y' - \sigma y,$ we rewrite~\eqref{pencil} in the following equivalent form:
\begin{equation} \label{regularization}
\ell y + 2 \lambda p(x) y = \lambda^2 y,
\end{equation}
$$\ell y := -(y^{[1]})' - \sigma(x) y^{[1]} - \sigma^2(x) y, 
$$
 where $y^{[1]}, y \in AC[0, \pi],$ and $\ell y \in L_2(0, \pi).$
Actually, after this, equation~\eqref{pencil} with singular $q$ is reduced to a system of two equations having integrable coefficients. 
The expression $\ell y$ first was introduced in~\cite{savchuk} in order to study properties of Sturm--Liouville operators with singular potentials. 
We also mention the earlier work~\cite{pfaff}, where the term $y'-\sigma y$ was used for regularization. 

In contrast to the Sturm--Liouville equation, equation~\eqref{pencil} possesses non-linear dependence on the spectral parameter, which complicates studying inverse problems. By this reason, for quadratic pencils, the theory of inverse spectral problems is not so developed as for the Sturm--Liouville operator~\cite{march, levitan,poschel,novabook}. 
At the same time, inverse problems for equation~\eqref{pencil} arise in various applications, e.g., in modeling
interactions between colliding relativistic particles in quantum mechanics~\cite{jaulient} and in recovering mechanical systems vibrating in viscous media~\cite{yama}.

Inverse spectral problems of recovering the quadratic pencil~\eqref{pencil} were studied in~\cite{gas-gus, hryn1,pron2, bon-gaidel, but1, but2, nabiev, ahtyamov,gus-nab,gulsen} and other works. The singular case when the coefficients  $p \in L_2(0, \pi)$ and $q \in W^{-1}_2(0, \pi)$ was investigated in~\cite{hryn1,pron2,bon-gaidel}. Hryniv and Pronska~\cite{hryn1, pron2} obtained full solutions for inverse problems of recovering by one spectrum together with the weight numbers or by two spectra. However, they
considered only real-valued $p$ and $q$ under the conditions that guarantee the existence of a positive solution. 
The more general case of complex-valued coefficients was studied by Bondarenko and Gaidel~\cite{bon-gaidel}, who obtained, in particular, local solvability and stability of the inverse problem by the spectrum and the weight numbers.

Unlike the mentioned works, here we consider the boundary conditions with dependence on the spectral parameter
\begin{equation} \label{boundary conditions}
y(0) =0, \quad f_1(\lambda) y^{[1]}(\pi) + f_2(\lambda) y(\pi) = 0,\end{equation}
where $f_1(\lambda)$ and $f_2(\lambda)$ are known entire functions. Note that in the case of regular $q \in L_1(0, \pi),$ one can choose $\sigma \in AC[0, \pi]$ such that $y^{[1]}(\pi)=y'(\pi).$ We study the inverse problem of recovering the pencil~\eqref{regularization}, \eqref{boundary conditions} by its subspectrum $\{ \lambda_n \}_{n \ge 1}$ with a number containing information on~$\omega_0=\int_0^\pi p(t) \, dt:$ 
\begin{ip} \label{main ip}
Given $\{ \lambda_n \}_{n \ge 1}$ and $(\omega_0 \bmod 1),$ 
recover the coefficients $p$ and $\sigma.$
\end{ip}
Inverse problems for the quadratic pencils with the spectral parameter in boundary conditions were studied in~\cite{yang-yu, nabiev-azerb, freil, tang-trans,but-half,khalili}. However, the known works are limited to the polynomial dependence.  For the first time, we study an inverse spectral problem for the pencils~\eqref{regularization} with arbitrary entire functions in the boundary conditions.


The reason for using conditions~\eqref{boundary conditions} is that they allow to unify studying partial inverse problems. Such problems consist in recovering the operators coefficients on a part of the domain by some spectral data and the coefficients on the rest part. 
In recent years, partial inverse problems for the quadratic pencils have been actively studied, see~\cite{but-half, bon-graph, khalili, wang, wang-bon, bon-phys, yang, shieh, durak}. 
In~\cite{but-half, yang}, it was proved that quadratic pencils are recovered on the half of the interval by a spectrum on the whole interval, which is an analogue of the Hochstadt--Lieberman theorem for the Sturm--Liouville operator~\cite{hoh}. Another direction of studies are partial inverse problems on graphs~\cite{wang-bon, bon-graph, bon-phys, shieh}, wherein one has to reconstruct components of operators on a part of edges.

For the Sturm--Liouville equations, i.e. when $p=0,$ the boundary value problems with conditions~\eqref{boundary conditions} were considered in~\cite{OpenMath,MethAppl,sing-graph}.  In~\cite{OpenMath,MethAppl}, the inverse problem of recovering the potential $q \in L_2(0, \pi)$ by the subspectrum and the average of the potential was studied. Under the condition of completeness of a special functional sequence, the mentioned input data give spectral characteristics of two another boundary value problems without spectral parameter in the boundary conditions. 
This allows to obtain a uniqueness theorem and an algorithm for recovering, see~\cite{OpenMath}. Further study of the inverse problem concerning the issues of solvability and local stability was carried in the work~\cite{MethAppl}. 
In~\cite{sing-graph}, analogous results were obtained in  the case of singular $q \in W^{-1}_2(0, l),$  and the constructed theory was applied to studying a partial inverse problem on an arbitrary graph.
Let us note that the inverse problems considered in~\cite{OpenMath,MethAppl,sing-graph} are closely related to the reconstruction of the potential by the values of the Weyl function at a discrete set of points, see~\cite{horvat, torba}. 

In this paper, relying on the ideas of~\cite{OpenMath,MethAppl,sing-graph}, we develop an approach to inverse spectral problems for the quadratic pencil with singular $q.$
We find the conditions on a part of the spectrum under which Inverse Problem~\ref{main ip} has a unique solution. 
These conditions include the completeness of certain functional sequences. 
We prove the uniqueness theorem and obtain an algorithm for solving Inverse Problem~\ref{main ip}. We also establish conditions on the subspectrum  which yield the needed properties of the functional sequence.
Then, we provide an example of the partial inverse problem to which these results are applicable. 

The paper is organized as follows. 
In Section~\ref{prelim}, we introduce necessary objects and provide auxiliary results from other papers. 
In Section~\ref{results}, we obtain a uniqueness theorem and an algorithm for Inverse Problem~\ref{main ip}, see Theorem~\ref{main result} and Algorithm~\ref{algorithm}. 
In Section~\ref{sufficient}, we find alternative conditions on the subspectrum, see Theorem~\ref{completeness}. 
In Section~\ref{application}, the Hochstadt--Lieberman-type inverse problem reduces to Inverse problem~\ref{main ip}. As a consequence, we obtain a uniqueness theorem, which generalizes the result from~\cite{yang} to the singular case, see Theorem~\ref{theorem partial}.

\section{Preliminaries}
\label{prelim}
Consider the solutions $S(x, \lambda)$ and $C(x, \lambda)$ of equation~\eqref{regularization} satisfying the initial conditions
\begin{equation*}
S(0, \lambda) = 0, \; S^{[1]}(0, \lambda) = 1, \quad C(0, \lambda) = 1, \; C^{[1]}(0, \lambda) = 0.
\end{equation*}
These solutions exist and they are unique since~\eqref{regularization} can be rewritten as a first-order system with integrable coefficients, see~\cite{savchuk, pronska}.  By the same reason, $S(\pi, \lambda), $ $S^{[1]}(\pi, \lambda),$ $C(\pi, \lambda),$ and $C^{[1]}(\pi, \lambda)$ are entire functions. 
Further, we use the following representations being a consequence of~\cite[Theorem~1]{pronska}  (see also~\cite[Lemma~2.1]{bon-gaidel}):
\begin{equation} \left.
\begin{array}{c}
\displaystyle S(\pi, \lambda) = \frac{\sin \pi(\lambda - \omega_0)}{\lambda} + \frac{1}{\lambda} \int_{-\pi}^\pi \mathcal{K}(t) \exp(i \lambda t) \, dt, \\[3mm]
\displaystyle S^{[1]}(\pi, \lambda) = \cos \pi(\lambda - \omega_0) + \int_{-\pi}^\pi \mathcal{N}(t) \exp(i \lambda t) \, dt, 
\end{array} \right\}
\label{KN}
\end{equation}
where $\omega_0 = \frac{1}{\pi} \int_0^\pi p(s) \, ds$ and $\mathcal{K}, \mathcal{N} \in L_2(-\pi, \pi).$ As the piece of the input data for Inverse Problem~\ref{main ip}, we take the fractional part $(\omega_0 \bmod 1)$ of the number $\omega_0.$

Put $\mathbb Z_0 = \mathbb Z \setminus \{ 0 \}.$ The zeros of the function $S(\pi, \lambda)$ can be enumerated as $\{ \theta_k \}_{k \in \mathbb Z_0}$ counting their multiplicities in such way that they satisfy asymptotics
\begin{equation} \label{asym1}
\theta_k = k + \omega_0+\varkappa_k, \; k \in \mathbb Z_0, \quad \{\varkappa_k \}_{k \in \mathbb Z_0} \in \ell_2,
\end{equation}
see~\cite{pronska}.
Let us denote $\theta_0 = 0$ and
$$\mathbb{S}_\theta = \{n \in \mathbb Z \colon \forall k < n \quad \theta_k \ne \theta_n\}, \quad m_{\theta,n} = \#\{ k \in \mathbb Z \colon \theta_n = \theta_k\}.$$
Without loss of generality, we assume that  for $n>0,$ $\theta_n \ne 0,$ and equal values in the sequence $\{ \theta_k \}_{k \in \mathbb Z}$ follow each other:
\begin{equation} \label{assumption o}
\theta_{n} = \theta_{n+1} = \ldots = \theta_{n + m_{\theta,n}-1}, \quad n \in \mathbb{S}_\theta.
\end{equation}

We also introduce the notations
$$e(x, \lambda) = \exp(i \lambda x), \quad f^{<j>}(z) = \frac{1}{j!} \frac{d^{j}}{d \lambda^j} f(\lambda)\Big|_{\lambda=z}.$$
Hereinafter, we will use the following fact being a direct consequence of Lemma~2 in~\cite{b-stab}.
\begin{statement} \label{e basis}
Let  $\{ \theta_k \}_{k \in \mathbb Z_0}$ be arbitrary sequence satisfying asymptotics~\eqref{asym1}. 
Then, the functional sequence $\{ e^{<\nu>}(t, \theta_n)\}_{n \in \mathbb{S}_\theta, \nu = \overline{0, m_{\theta,n}-1}}$ is a Riesz basis in $L_2(-\pi, \pi).$ 
\end{statement}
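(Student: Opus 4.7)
The plan is to cite Lemma~2 of \cite{b-stab} essentially verbatim, so the work consists in matching notation and handling the harmless shift by $\omega_0$. Since $e(t,\lambda)=e^{i\lambda t}$, we have $e^{<\nu>}(t,\theta_n)=\frac{(it)^\nu}{\nu!}e^{i\theta_n t}$, and the system in question is, up to nonzero factorial constants, the standard quasi-exponential system $\{t^\nu e^{i\theta_n t} : n\in\mathbb{S}_\theta,\; 0\le\nu\le m_{\theta,n}-1\}$ associated with the multiset $\{\theta_k\}$. The enumeration convention \eqref{assumption o} grouping equal values consecutively is exactly what makes the multiplicity bookkeeping unambiguous and matches the setup in which stability theorems for exponential Riesz bases are typically stated.

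First I would eliminate the shift by $\omega_0$. Because $[-\pi,\pi]$ is bounded, the multiplication operator $M\varphi(t)=e^{-i\omega_0 t}\varphi(t)$ is bounded and boundedly invertible on $L_2(-\pi,\pi)$ even for complex $\omega_0$ (since $|e^{-i\omega_0 t}|$ is bounded above and below by positive constants on the interval). Hence $M$ preserves Riesz bases, and under $M$ our system is unitarily (up to a bounded similarity) equivalent to the one attached to the sequence $\theta_k-\omega_0=k+\varkappa_k$, a pure $\ell_2$-perturbation of the integer lattice. For the unperturbed lattice $\{k\}_{k\in\mathbb Z}$, the corresponding exponential family is the classical Fourier orthogonal basis of $L_2(-\pi,\pi)$, which is the natural ``unperturbed'' instance of the cited stability lemma.

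Then I would invoke Lemma~2 of \cite{b-stab} directly: an $\ell_2$-perturbation of $\mathbb Z$, with collisions of nodes handled through derivatives $e^{<\nu>}$, produces a Riesz basis in $L_2(-\pi,\pi)$. Combining this with the $M$-reduction of the previous paragraph yields the proposition.

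The only real subtlety, rather than an obstacle, is the derivative-based treatment of coinciding $\theta_k$: raw exponentials at clustered or coinciding nodes fail to form a Riesz basic system, while $\{t^\nu e^{i\theta_n t}\}$ with the correct multiplicities $m_{\theta,n}$ remains both linearly independent and Riesz basic under $\ell_2$-asymptotics. Thus the concrete verification reduces to checking that the counting of multiplicities $m_{\theta,n}$ and the index set $\mathbb{S}_\theta$ fixed here agree with the convention used in Lemma~2 of \cite{b-stab}, which is notational rather than mathematical.
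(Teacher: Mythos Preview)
Your proposal is correct and matches the paper's approach exactly: the paper does not give a proof at all but simply states that the proposition is ``a direct consequence of Lemma~2 in~\cite{b-stab}.'' Your elaboration---reducing to an $\ell_2$-perturbation of $\mathbb Z$ via the bounded multiplication operator $e^{-i\omega_0 t}$ and then invoking the cited lemma---is precisely the intended justification, just spelled out in more detail than the paper bothers to give.
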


Let us introduce the Weyl function $M(\lambda)= -\frac{C(\pi, \lambda)}{S(\pi, \lambda)}.$ In~\cite{bon-gaidel}, the following inverse problem was studied:
 \begin{ip} \label{sd ip}
 Given $\{ \theta_k \}_{k \in \mathbb Z_0}$ and the values
 $$M_{n+\nu} := \mathop{\mathrm Res}_{\theta=\theta_n} \, (\theta-\theta_n)^\nu M(\theta), \quad n \in \mathbb{S}_\theta, \; \nu = \left\{\begin{array}{cc}\overline{0, m_{\theta,n} - 1}, & \theta_n \ne 0,\\[2mm]
 \overline{0, m_{\theta,n} - 2}, & \theta_n = 0,
 \end{array}\right.$$
 find $p$ and $q.$ 
 \end{ip}
 We note that if $S(\pi, \lambda)$ and $S^{[1]}(\pi, \lambda)$ are known, then one can find the input data of Inverse problem~\ref{sd ip}.
 Indeed, it is easy to see that
\begin{equation} \label{wronski}
C(x, \lambda) S^{[1]}(x, \lambda) - C^{[1]}(x, \lambda) S(x, \lambda) = 1, \quad x \in [0, \pi].
\end{equation}
Using~\eqref{wronski} in $x=\pi,$ we obtain
$$ M(\lambda) S^{[1]}(\pi, \lambda) = \frac{1}{S(\pi, \lambda)} - C^{[1]}(\pi, \lambda).$$
Relation~\eqref{wronski} also yields $S^{[1]}(\pi, \theta_n) \ne 0,$ $n \in \mathbb Z_0.$ Then, we have
\begin{equation} \label{compute}
M_{n+\nu} = \frac{1}{S^{[1]}(\pi, \theta_n)} \, \mathop{\mathrm Res}_{\theta = \theta_n} \, \frac{(\theta - \theta_n)^\nu}{S(\pi, \theta)}, \quad n \in \mathbb{S}_\theta, \; \nu = \left\{\begin{array}{cc}\overline{0, m_{\theta,n} - 1}, & \theta_n \ne 0,\\[2mm]
 \overline{0, m_{\theta,n} - 2}, & \theta_n = 0.
 \end{array} \right.
\end{equation}
Since $\{ \theta_k\}_{k \in \mathbb Z_0}$ are the zeros of $S(\pi, \lambda),$ we uniquely construct the input data of  Inverse problem~\ref{sd ip}.
 
 In~\cite{bon-gaidel}, for the class $q \in W^{-1}_2(0, \pi)$ and $p \in L_2(0, \pi),$ a uniqueness theorem and an algorithm of solution were obtained for Inverse problem~\ref{sd ip}. 
For the regular case $q \in L_2(0, \pi)$ and $p \in W^1_2(0, \pi),$ the corresponding results were obtained earlier in~\cite{but1,but2}.
Further, we reduce Inverse problem~\ref{main ip} to Inverse Problem~\ref{sd ip} and apply the results from~\cite{bon-gaidel}. 
\section{A uniqueness theorem and an algorithm}
\label{results}
In this section, we consider the characteristic function and the subspectrum. We introduce the designations which allow working with multiple values in the subspectrum. After constructing a special functional sequence $\{ v_n \}_{n=0}^\infty,$ we obtain a uniqueness theorem and an algorithm.

First, we note that
a number $\lambda$ is an eigenvalue of the boundary value problem~\eqref{regularization},~\eqref{boundary conditions} if and only if it is a zero of the characteristic function
\begin{equation}
\Delta(\lambda) = f_1(\lambda) \eta_2(\lambda) + f_2(\lambda) \eta_1(\lambda), \label{Delta}
\end{equation}
where we denoted $\eta_1(\lambda) = S(\pi, \lambda)$ and $\eta_2(\lambda) = S^{[1]}(\pi, \lambda).$
Consider a sequence $\{ \lambda_k \}_{k \ge 1}$ such that $\Delta(\lambda_n) = 0$ and each $\lambda_k$ occurs in the sequence not more times than its multiplicity as zero of $\Delta(\lambda).$ We call $\{ \lambda_k \}_{k \ge 1}$ a subspectrum. 

Now, following the scheme applied in~\cite{OpenMath}, we construct certain functional sequence by the given subspectrum. 
Put $\lambda_0 := 0$ and introduce the notations
$$\mathbb{S}_\lambda = \{ n \ge 0 \colon \lambda_n \ne \lambda_k \quad \forall k \colon 0 \le k < n\}, \quad m_{\lambda,n} = \#\{ k \ge 0 \colon \lambda_k = \lambda_n\}.$$
We make the following assumption on $\{ \lambda_k \}_{k \ge 0}$ analogous to~\eqref{assumption o}:
$$\lambda_n = \lambda_{n+1} = \ldots = \lambda_{n+m_{\lambda, n}-1}, \quad n \in \mathbb{S}_\lambda.$$

Substituting \eqref{KN} into \eqref{Delta}, we obtain
\begin{multline} \label{lambdaDelta}
\lambda \Delta(\lambda) = \lambda f_1(\lambda)\Big( \lambda \cos \pi(\lambda - \omega_0) + \int_{-\pi}^\pi \mathcal{N}(t)  e(t, \lambda) \, dt\Big) + \\
+f_2(\lambda)\Big( \sin \pi(\lambda - \omega_0) + \int_{-\pi}^\pi \mathcal{K}(t) e(t, \lambda) \, dt\Big).
\end{multline}
Introduce the Hilbert space of complex-valued vector-functions
$$\mathcal{H} = L_2(-\pi, \pi) \oplus  L_2(-\pi, \pi).$$
For $g = [g_1, g_2]$ and $h=[h_1, h_2],$ the scalar product and the norm in $\mathcal H$ are given by the formulae
$$ (g, h)_{\mathcal H} = \int_{\pi}^\pi \big(\overline{g_1(t)} h_1(t) + \overline{g_2(t)} h_2(t)\big) \, dt, \quad \| h\|_{\mathcal H} = \sqrt{(h, h)}.$$
In particular, we have $u(t) := [\overline{\mathcal N}(t), \overline{\mathcal K}(t)] \in \mathcal H.$

Then, relation \eqref{lambdaDelta} yields
$$(u(\cdot), v(\cdot, \lambda))_{\mathcal H} = \lambda \Delta(\lambda) + w(\lambda),$$
where
$$v(t, \lambda) = [\lambda f_1(\lambda) e(t, \lambda), f_2(\lambda)e(t, \lambda)],$$ 
$$w(\lambda) = -f_1(\lambda) \lambda \cos \pi(\lambda-\omega_0) - f_2(\lambda) \sin \pi(\lambda-\omega_0).$$
Since $\{\lambda_k\}_{k\ge 0}$ are zeros of $\lambda \Delta(\lambda),$ we get
\begin{equation}
(u(\cdot), v^{<\nu>}(\cdot, \lambda_n))_{\mathcal H} = w^{<\nu>}(\lambda_n), \quad n \in \mathbb{S}_{\lambda}, \; \nu = \overline{0, m_{\lambda,n}-1}. \label{auxiliary basis}
\end{equation}
For $n=0$ and $\nu=0,$ by the analyticity of $S(\pi, \lambda)$ at $\lambda=0,$ we have a stronger relation
\begin{equation} \label{analyticity}
\int_{-\pi}^\pi \mathcal{K}(t) \, dt = \sin \pi \omega_0,
\end{equation}
see~\eqref{KN}.
For $n \in \mathbb{S}_\lambda$ and $\nu = \overline{0, m_{\lambda,n}-1},$ we denote 
$$v_{n+\nu}(t) = \left\{ \begin{array}{cc}  
v^{<\nu>}(t, \lambda_n), &   n+\nu>0,\\[2mm]
 [0, 1], & n=\nu=0,
\end{array}\right.
\quad 
w_{n+\nu}(t) = \left\{ \begin{array}{cc}  
w^{<\nu>}(\lambda_n), &   n+\nu>0,\\[2mm]
\sin \pi \omega_0, & n=\nu=0.
\end{array}\right.
 $$
 Obviously, when $n \in \mathbb{S}_\lambda$ and $\nu = \overline{0, m_{\lambda,n}-1},$ the index $k=n+\nu$ runs over all values in $\mathbb N \cup \{ 0\}.$
Using the introduced notations,  from~\eqref{auxiliary basis} and~\eqref{analyticity} we obtain
\begin{equation} \label{coefficientsRelation}
(u, v_{k})_{\mathcal H} = w_{k}, \quad k \ge 0.
\end{equation}
If the sequence $\{ v_n \}_{n=0}^\infty$ is complete in $\mathcal H,$ we can find $u = [\overline{\mathcal K}, \overline{\mathcal N}].$ Then, the functions $S^{[1]}(\pi, \lambda)$ and $S(\pi, \lambda)$ are known, and the input data of Inverse problem~\ref{sd ip} are constructed uniquely. 

Thus, we formulate the following assumption on the sequence $\{ v_k \}_{k=0}^\infty:$
\smallskip

\noindent{\it (C) The  sequence $\{ v_k \}_{k=0}^\infty$ is complete in $\mathcal H.$}
\smallskip

Now, we are ready to obtain a uniqueness theorem for Inverse Problem~\ref{main ip}.
Let $L(p, \sigma)$ be the boundary value problem~\eqref{regularization},~\eqref{boundary conditions} with arbitrary coefficients $\sigma, p \in L_2(0, \pi).$ 
Along with $L(p, \sigma),$ we consider another problem $L(\tilde p, \tilde\sigma)$ of the same form
but with other coefficients $\tilde p, \tilde q \in L_2(0, \pi)$ 
 Let us agree that, if a symbol $\alpha$ denotes an object related to $p$ and $q,$ then the symbol $\tilde \alpha$ with tilde will denote the analogous object related to $\tilde p$ and $\tilde  q.$
\begin{thm} \label{main result}
Let $\{\lambda_n\}_{n=1}^\infty$ and $\{\tilde\lambda_n\}_{n=1}^\infty$ be subspectra of the boundary value problems $L(p, \sigma)$ and $L(\tilde p, \tilde  \sigma),$ respectively. Suppose that the sequence $\{ v_n \}_{n=0}^\infty$ constructed by $\{ \lambda_n\}_{n=1}^\infty$ satisfies the condition (C). Then the equalities $\{ \lambda_n\}_{n=1}^\infty=\{ \tilde\lambda_n\}_{n=1}^\infty$ and $(\omega_0\bmod 1)=(\tilde\omega_0 \bmod 1)$ yield $p \equiv \tilde p,$ $\sigma \equiv \tilde \sigma.$
\end{thm}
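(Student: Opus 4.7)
The plan is to reduce Inverse Problem~\ref{main ip} to Inverse Problem~\ref{sd ip} and then invoke the uniqueness theorem from~\cite{bon-gaidel}. By its very construction, the sequence $\{v_n\}_{n\ge 0}$ depends only on the subspectrum $\{\lambda_n\}_{n\ge 1}$ and on the fixed functions $f_1,f_2$, with no reference to the coefficients $p,\sigma$ or to $\omega_0$. Therefore, the equality $\{\lambda_n\}_{n\ge 1}=\{\tilde\lambda_n\}_{n\ge 1}$ immediately gives $v_n=\tilde v_n$ for all $n\ge 0$.

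Next, I would compare $w_n$ with $\tilde w_n$. The condition $(\omega_0\bmod 1)=(\tilde\omega_0\bmod 1)$ means $\omega_0=\tilde\omega_0+m$ for some integer $m$, whence $\cos\pi(\lambda-\omega_0)=(-1)^m\cos\pi(\lambda-\tilde\omega_0)$ and the analogous identity for sines. Plugging these into the definitions of $w(\lambda)$ and of $w_0$ yields $w_n=(-1)^m\tilde w_n$ for every $n\ge 0$. Applying~\eqref{coefficientsRelation} to both problems, one gets $(u-(-1)^m\tilde u,v_n)_{\mathcal H}=0$ for all $n\ge 0$, and the completeness hypothesis (C) forces $u=(-1)^m\tilde u$, i.e.\ $\mathcal{K}=(-1)^m\tilde{\mathcal{K}}$ and $\mathcal{N}=(-1)^m\tilde{\mathcal{N}}$. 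Substituting back into~\eqref{KN}, we conclude $S(\pi,\lambda)=(-1)^m\tilde S(\pi,\lambda)$ and $S^{[1]}(\pi,\lambda)=(-1)^m\tilde S^{[1]}(\pi,\lambda)$.

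The two pairs of characteristic functions now differ only by a common overall sign, so their zero sequences coincide with matching multiplicities: $\theta_k=\tilde\theta_k$ for all $k\in\mathbb Z_0$. Moreover, formula~\eqref{compute} shows that each residue $M_{n+\nu}$ is invariant under this simultaneous sign flip, since the factor $(-1)^m$ carried by the residue of $1/S(\pi,\theta)$ is cancelled by the $(-1)^m$ in the denominator $S^{[1]}(\pi,\theta_n)$. Hence the full input data of Inverse Problem~\ref{sd ip} coincide for $L(p,\sigma)$ and $L(\tilde p,\tilde\sigma)$, and the uniqueness result from~\cite{bon-gaidel} yields $p\equiv\tilde p$ and $\sigma\equiv\tilde\sigma$. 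The main subtlety is bookkeeping the integer $m$: the hypothesis pins $\omega_0$ down only modulo~$1$, and this genuinely produces a sign ambiguity in $u$, $S(\pi,\lambda)$, and $S^{[1]}(\pi,\lambda)$; what rescues the argument is precisely the cancellation of this sign in the Weyl-type data~\eqref{compute}, which makes the reduction to Inverse Problem~\ref{sd ip} unambiguous.
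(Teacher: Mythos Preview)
Your argument tracks the paper's proof almost exactly through the point where you obtain $S(\pi,\lambda)=(-1)^m\tilde S(\pi,\lambda)$ and $S^{[1]}(\pi,\lambda)=(-1)^m\tilde S^{[1]}(\pi,\lambda)$ and deduce that the spectral data $\{\theta_k\}$, $\{M_k\}$ for the two problems coincide. The cancellation of the sign $(-1)^m$ in~\eqref{compute} is handled correctly.

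There is, however, a genuine gap in your last sentence. The uniqueness theorem from~\cite{bon-gaidel} (their Theorem~2.9) solves Inverse Problem~\ref{sd ip}, whose unknowns are $p$ and $q$, not $p$ and $\sigma$. What it delivers is $p\equiv\tilde p$ in $L_2(0,\pi)$ and $q\equiv\tilde q$ in $W^{-1}_2(0,\pi)$; the latter says only that $\sigma-\tilde\sigma$ is a constant $h$. Since the theorem you are proving asserts $\sigma\equiv\tilde\sigma$ as elements of $L_2(0,\pi)$, you still need to show $h=0$. The paper does this in a separate step: once $p\equiv\tilde p$ one has $\omega_0=\tilde\omega_0$, hence $m=0$ and $S^{[1]}(\pi,\lambda)=\tilde S^{[1]}(\pi,\lambda)$; on the other hand, for $\sigma=\tilde\sigma+h$ a direct computation gives $S(x,\lambda)=\tilde S(x,\lambda)$ and $S^{[1]}(x,\lambda)=\tilde S^{[1]}(x,\lambda)-hS(x,\lambda)$, so evaluating at $x=\pi$ forces $hS(\pi,\lambda)\equiv 0$ and thus $h=0$. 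You should add this short argument; without it the conclusion $\sigma\equiv\tilde\sigma$ is not justified.
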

\begin{proof}
I. Let us construct the sequences $\{ \tilde v_k\}_{k=0}^\infty$ and $\{ \tilde w_k\}_{k=0}^\infty$ by the subspectrum $\{ \tilde \lambda_n \}_{n=1}^\infty$ analogously to the sequences $\{v_k \}_{k=0}^\infty$ and $\{w_k\}_{k=0}^\infty.$ Similarly to~\eqref{coefficientsRelation}, we have
$$(\tilde u, \tilde v_k) = \tilde w_k, \; k \ge 0, \quad \tilde u := [\overline{\tilde{\mathcal N}}, \overline{\tilde{\mathcal K}}].$$ The condition $(\omega_0\bmod 1)=(\tilde\omega_0 \bmod 1)$ means that $\omega_0 = \tilde \omega_0 + m$ for some $m \in \mathbb Z.$ 
Since $\{ \lambda_n\}_{n=1}^\infty=\{ \tilde\lambda_n\}_{n=1}^\infty$ and $\omega_0=\tilde\omega_0+m,$ by the anti-periodicity of the functions $\sin \pi z$ and $\cos \pi z,$ the equalities $\tilde v_k = v_k$ and $\tilde w_k = (-1)^m w_k$ hold.
The latter means $(u - (-1)^m \tilde u, v_n) = 0,$ and, by condition (C), we arrive at $u = (-1)^m \tilde u$ in $\mathcal H.$ 
Then, we have $\mathcal K \equiv (-1)^m \tilde{\mathcal  K}$ and $\mathcal N \equiv (-1)^m \tilde{\mathcal N}.$ 

By formula~\eqref{KN}, the identities 
\begin{equation}
S(\pi, \lambda) =(-1)^m\tilde S(\pi, \lambda), \quad S^{[1]}(\pi, \lambda) = (-1)^m\tilde S^{[1]}(\pi, \lambda)
\label{S coincide}
\end{equation}
 are fulfilled. Then, the corresponding sequences $\{ \theta_k \}_{k \in \mathbb{Z}_0}$ and $\{ \tilde\theta_k \}_{k \in \mathbb{Z}_0},$ being the part of input data for Inverse Problem~2, coincide. 
Using~\eqref{compute} and the analogous relation for $\tilde M_k,$ 
 we get $M_k = \tilde M_k,$ $k \in \mathbb Z_0.$ By virtue of the uniqueness theorem~\cite[Theorem~2.9]{bon-gaidel} for Inverse Problem~\ref{sd ip}, we have proved that $p=\tilde p$ in $L_2(0, \pi)$ and $q = \tilde q$ in $W^{-1}_2(0, \pi).$ We also have $\omega_0 = \tilde \omega_0.$

II. Let us prove that $\sigma = \tilde \sigma$ as well. Since $q = \tilde q$ in $W^{-1}_2(0, \pi),$ for some $h \in \mathbb C,$ we have $\sigma = \tilde \sigma+h.$ Then, it is easy to see that 
$$S(x, \lambda) = \tilde S(x, \lambda), \ S^{[1]}(x, \lambda) = \tilde S^{[1]}(x, \lambda) - h S(x, \lambda), \quad x \in [0, \pi], \; \lambda \in \mathbb C.$$ 
Substituting $x = \pi,$ we obtain 
$\tilde S^{[1]}(\pi, \lambda) - S^{[1]}(\pi, \lambda) = h S(\pi, \lambda).$ Using this formula and~\eqref{S coincide} with $m=0,$ we have $h S(\pi, \lambda) \equiv 0.$ Taking arbitrary $\lambda \ne \theta_n,$ we obtain $h=0.$
 \end{proof}
 
For an algorithm, we need a stronger assumption than (C):
\smallskip

\noindent{\it (B) The  sequence $\{ v_k \}_{k=0}^\infty$ is an unconditional basis in $\mathcal H.$}
\smallskip

Under this condition, there exists  the basis $\{ v^*_k \}_{k=0}^\infty$ which is biorthogonal to $\{ v_k \}_{k=0}^\infty,$ and the series $\sum_{k=0}^\infty (f, v_k) \, v^*_k$ converges to $f $ for each $f \in \mathcal H,$ see~\cite[Ch.~VI]{gohberg}.
Based on this fact and the  above computations,  we obtain the following algorithm for solving Inverse Problem~\ref{main ip}.
 
 \begin{alg} \label{algorithm}
 Given the subspectrum $\{ \lambda_n \}_{n=1}^\infty$  and the value $(\omega_0 \bmod 1).$ 
 To recover $p$ and $\sigma,$ one should:
 \begin{enumerate}
 \item Construct $\{ v_k \}_{k=0}^\infty$ and $\{ w_k \}_{k=0}^\infty$ putting $\omega_0 := (\omega_0 \bmod 1).$ 
 \item Find the sequence $\{ v^*_k \}_{k=0}^\infty$ which is biorthogonal to $\{ v_k \}_{k=0}^\infty$ in $\mathcal H,$ and construct 
 $$u = [\overline{\mathcal N}, \overline{\mathcal K}] = \sum_{k=0}^\infty \overline{w_k} v^*_k.$$
 \item Construct the functions $S(\pi, \lambda)$ and $S^{[1]}(\pi, \lambda)$ by~\eqref{KN}.
 \item Find $\{ \theta_k \}_{k \in \mathbb Z_0}$ as the zeros of $S(\pi, \lambda)$ and $\{ M_k \}_{k \in \mathbb Z_0}$ by formula~\eqref{compute}.
 \item Find $p$ and $q$ as solution of Inverse problem~2, see Algorithm~2.8 in~\cite{bon-gaidel}.
 \item Put $\omega_0 = \frac{1}{\pi} \int_0^\pi p(s) \, ds.$  If $\omega_0 - (\omega_0 \bmod 1)$ is odd, multiply on $-1$ the functions $S(\pi, \lambda)$ and $S^{[1]}(\pi, \lambda)$ constructed in 3.
 \item Choose any $\tilde\sigma \in L_2(0, \pi)$ such that $\tilde\sigma'=q.$ 
 Construct $\tilde S^{[1]}(\pi, \lambda)$ corresponding to the boundary value problem $L(\tilde p, \tilde \sigma)$ with $\tilde p = p.$ Find $\sigma$ by the formula
 $$\sigma = \tilde \sigma + h, \quad h = \frac{\tilde S^{[1]}(\pi, \lambda) - S^{[1]}(\pi, \lambda)}{S(\pi, \lambda)}.$$
 \end{enumerate}
 \end{alg}
 
 \begin{remark} 
 In fact, for Algorithm~\ref{algorithm} we need only the basis property of $\{ v_k\}_{k=0}^\infty.$ However, practically it is easier to verify alternative conditions sufficient for (B), see Theorem~\ref{completeness}. 
 \end{remark}
 \section{Sufficient conditions for (C) and (B)}
 \label{sufficient}
In the previous section, we obtained the uniqueness theorem and the algorithm under the conditions formulated for the sequence $\{ v_k \}_{k=0}^\infty.$ 
For construction, this sequence requires the subspectrum $\{ \lambda_n \}_{n = 1}^\infty$ and the pair of the functions $f_1(\lambda),$ $f_2(\lambda).$ It is easier to consider another sequence constructed only by the subspectrum and verify the following independent conditions:
 
 \smallskip
 \noindent{\it (S) For $n \in \mathbb N,$ the functions $f_1(\lambda)$ and $f_2(\lambda)$ do not vanish simultaneously in $\lambda_n.$}
  \smallskip
 
 \noindent{\it (C2) The functional sequence $\{ e^{<\nu>}(t, \lambda_n)\}_{n\in \mathbb{S}_\lambda, v=\overline{0, m_{\lambda,n}-1}}$ is complete in $L_2(-2\pi, 2\pi).$}
  \smallskip
 
 \noindent{\it (A) For the subspectrum $\{ \lambda_n \}_{n \ge 1},$ the following formulae hold:
 ${\mathrm Im}\,\lambda_n = O(1),$ 
and $m_{\lambda,n} = 1$ for $n \ge n_0,$ $n \in \mathbb N.$}
  \smallskip
 
 \noindent{\it (B2) The functional sequence $\{ e^{<\nu>}(t, \lambda_n)\}_{n\in \mathbb{S}_\lambda, v=\overline{0, m_{\lambda,n}-1}}$ is a Riesz basis in $L_2(-2\pi, 2\pi).$} 
  \smallskip

The main result of the section is the following theorem.
  \begin{thm} \label{completeness}
(i) Under conditions (S) and (C2), condition (C) holds.

\noindent{(ii) Under conditions (S), (A), and (B2), condition (B) holds.}
\end{thm}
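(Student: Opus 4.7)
For part (i), I would begin with an arbitrary $g = [g_1, g_2] \in \mathcal H$ annihilating every $v_k$, and introduce the entire functions
\[
G_j(\lambda) := \int_{-\pi}^\pi \overline{g_j(t)}\, e(t, \lambda)\, dt, \qquad j = 1, 2,
\]
which by Paley--Wiener are of exponential type $\le \pi$ and belong to $L_2(\mathbb R)$. Set
\[
F(\lambda) := \lambda f_1(\lambda) G_1(\lambda) + f_2(\lambda) G_2(\lambda), \qquad P(\lambda) := \lambda \eta_1(\lambda) G_1(\lambda) - \eta_2(\lambda) G_2(\lambda).
\]
A direct computation gives $(g, v^{<\nu>}(\cdot, \lambda_n))_{\mathcal H} = F^{<\nu>}(\lambda_n)$, so the hypothesis yields $F^{<\nu>}(\lambda_n) = 0$ for $n \in \mathbb S_\lambda$ with $n \ge 1$ and $\nu = 0, \ldots, m_{\lambda, n}-1$; by definition of the subspectrum, $\Delta$ vanishes to the same orders at each such $\lambda_n$. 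The heart of the argument is to transfer these vanishings to $P$.

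For this transfer I would use the matrix identity
\[
(f_1(\lambda), f_2(\lambda))\begin{pmatrix} \lambda G_1(\lambda) & \eta_2(\lambda) \\ G_2(\lambda) & \eta_1(\lambda) \end{pmatrix} = (F(\lambda), \Delta(\lambda)),
\]
whose determinant equals $P(\lambda)$. By condition (S), one of $f_1(\lambda_n), f_2(\lambda_n)$ is non-zero and hence a unit in the local ring of analytic germs at $\lambda_n$; multiplying the $2 \times 2$ matrix from the left by the locally invertible matrix $\bigl(\begin{smallmatrix} f_1 & f_2 \\ 0 & 1 \end{smallmatrix}\bigr)$ (or the symmetric variant when $f_2(\lambda_n) \neq 0$) and taking determinants yields $f_1(\lambda) P(\lambda) = F(\lambda)\eta_1(\lambda) - \Delta(\lambda) G_2(\lambda)$, so $P$ inherits the vanishing of order $m_{\lambda, n}$ at every such $\lambda_n$.

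From the representations (\ref{KN}), $\lambda \eta_1(\lambda)$ and $\eta_2(\lambda)$ are entire of exponential type $\pi$ and bounded on $\mathbb R$, so $P$ is entire of exponential type $\le 2\pi$ and lies in $L_2(\mathbb R)$; Paley--Wiener gives $P(\lambda) = \int_{-2\pi}^{2\pi} p(t) e(t, \lambda)\, dt$ for some $p \in L_2(-2\pi, 2\pi)$. The relation $(g, v_0)_{\mathcal H} = G_2(0) = 0$ supplies the missing condition $P(0) = 0$ at the index $n=0$, so condition (C2) forces $p \equiv 0$, i.e.\ $P \equiv 0$. The identity $\lambda \eta_1(\lambda) G_1(\lambda) \equiv \eta_2(\lambda) G_2(\lambda)$, combined with $\eta_2(\theta_k) \ne 0$ from the Wronskian (\ref{wronski}), forces $G_2$ to vanish at each $\theta_k$ with multiplicity $\ge m_{\theta, k}$; together with $G_2(0) = 0$, Proposition \ref{e basis} gives $g_2 \equiv 0$, and then $\lambda \eta_1 G_1 \equiv 0$ gives $g_1 \equiv 0$. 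This proves (C).

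For part (ii), the plan is to upgrade the chain of implications of (i) to a quantitative $\ell^2$-equivalence. The map $g \mapsto p$ defined above is a bounded linear operator $\mathcal H \to L_2(-2\pi, 2\pi)$, injective by (i). Under condition (A), the uniform estimates $|\operatorname{Im}\lambda_n|= O(1)$ and $m_{\lambda,n}=1$ for $n \ge n_0$ make the scalar prefactors in the relation $f_1(\lambda_n) P(\lambda_n) = (g, v_n)_{\mathcal H}\,\eta_1(\lambda_n)$ (which follows from $f_1 P = F\eta_1 - \Delta G_2$ and $\Delta(\lambda_n) = 0$) behave controllably, so the Riesz basis property of $\{e^{<\nu>}(\cdot, \lambda_n)\}$ in $L_2(-2\pi, 2\pi)$ supplied by (B2) transfers via $p$ to the norm equivalence characterising the unconditional basis property of $\{v_k\}$ in $\mathcal H$. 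I expect the main obstacle to lie precisely in this quantitative step: proving that $g \mapsto p$ is actually a bounded isomorphism onto $L_2(-2\pi, 2\pi)$, treating the finitely many indices with $m_{\lambda, n} > 1$ separately, and verifying that the scalar prefactors relating $(g, v_n)_{\mathcal H}$ to $p(\lambda_n)$ (which involve ratios $f_1(\lambda_n)/\eta_1(\lambda_n)$) do not destroy the $\ell^2$-equivalence needed for (B).
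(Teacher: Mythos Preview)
Your argument for part~(i) is correct and parallels the paper's closely. The paper introduces the auxiliary system $\{g_n\}$ built from $g(t,\lambda)=[\lambda\eta_1(\lambda)e(t,\lambda),\,-\eta_2(\lambda)e(t,\lambda)]$ and quotes a lemma (proved as in~\cite{OpenMath}) stating that under~(S) completeness of $\{v_n\}$ follows from that of $\{g_n\}$; it then shows $\{g_n\}$ is complete by exactly your Paley--Wiener argument applied to what you call $P(\lambda)$, followed by the same use of $\eta_2(\theta_k)\ne 0$ and Proposition~\ref{e basis}. Your determinant identity $f_1 P = F\eta_1 - \Delta G_2$ is a direct proof of that cited transfer lemma, so the two routes are essentially the same.

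Part~(ii), however, is not a proof: you correctly name the obstacle but do not overcome it. The paper's method here is different from your outline and worth knowing. Instead of trying to show $g\mapsto p$ is an isomorphism $\mathcal H\to L_2(-2\pi,2\pi)$, the paper works with the leading part
\[
g^0(t,\lambda)=\big[e(t,\lambda)\sin\pi(\lambda-\omega_0),\ -e(t,\lambda)\cos\pi(\lambda-\omega_0)\big]
\]
and proves by an elementary trigonometric computation the Gram identity
\[
(g^0(\cdot,\mu),\,g^0(\cdot,\lambda))_{\mathcal H}=\tfrac12\,(e(\cdot,\mu),\,e(\cdot,\lambda))_{L_2(-2\pi,2\pi)},
\]
so that $\{g^0(\cdot,\tau_n)\}$ is a Riesz basis in $\mathcal H$ whenever $\{e(\cdot,\tau_n)\}$ is one in $L_2(-2\pi,2\pi)$. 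Under~(A) and~(B2) the full system $\{g_n\}$ is complete (from part~(i)) and $\ell_2$-close to a suitable $\{g^0_n\}$, hence also a Riesz basis. Finally, under~(S) the relation $\Delta(\lambda_n)=0$ forces $\eta_j(\lambda_n)=(-1)^{j-1}C_{n,0}f_j(\lambda_n)$ with $C_{n,0}\ne 0$ (a fact quoted from~\cite{OpenMath}), so $g_n=C_{n,0}v_n$ for $n\ge n_0$; rescaling a Riesz basis by non-zero scalars gives an unconditional basis, which is~(B).

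Your proposed route could in principle be completed---the Gram identity above is exactly the statement that the leading part of your map $g\mapsto p$ is, up to the factor $\sqrt 2$, an isometry---but as written you have established neither surjectivity nor control of the prefactors. The ratio $f_1(\lambda_n)/\eta_1(\lambda_n)$ you mention equals $C_{n,0}^{-1}$; showing it is well-defined and non-zero is precisely the algebraic step the paper imports from~\cite{OpenMath}, and its possible unboundedness is why the conclusion for $\{v_n\}$ is an \emph{unconditional} basis rather than a Riesz basis.
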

By the point (i) of this theorem, conditions~(S) and~(C2) are sufficient for the uniqueness of recovery. By its point~(ii), conditions~(S),~(A), and~(B2) are sufficient for Algorithm~\ref{algorithm}.

To prove Theorem~\ref{completeness}, we need several auxiliary results.

 \begin{lemma} \label{completeEquiv}  Let us introduce the functions
 $$g(t, \lambda)= [\lambda \eta_1(\lambda) e(t, \lambda), -\eta_2(\lambda) e(t, \lambda)],$$
 $$g_{n+\nu}(t)= \left\{\begin{array}{cc} g^{<\nu>}(t, \lambda_n), & n+\nu > 1, \\[2mm]
[0, 1], & n=\nu=0,
 \end{array} \right.
 \quad  n \in \mathbb S_\lambda, \; \nu=\overline{0, m_{\lambda,n}-1}.$$
Under condition~(S), the sequence $\{ v_n \}_{n=0}^\infty$ is complete in $\mathcal H$ whenever $\{ g_n \}_{n=0}^\infty$ is complete in $\mathcal H.$
 \end{lemma}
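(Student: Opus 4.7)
The plan is a standard duality argument: assuming $h = [h_1, h_2] \in \mathcal H$ satisfies $(h, v_k)_{\mathcal H} = 0$ for every $k \ge 0$, I will show that $(h, g_k)_{\mathcal H} = 0$ for every $k$ as well, so completeness of $\{g_k\}_{k=0}^\infty$ forces $h = 0$. The bookkeeping device is to introduce the entire functions
\[
H_j(\lambda) := \int_{-\pi}^\pi \overline{h_j(t)}\, e(t, \lambda)\, dt, \qquad j = 1, 2,
\]
together with
\[
\Phi(\lambda) := \lambda f_1(\lambda) H_1(\lambda) + f_2(\lambda) H_2(\lambda), \qquad \Psi(\lambda) := \lambda \eta_1(\lambda) H_1(\lambda) - \eta_2(\lambda) H_2(\lambda);
\]
interchanging the integral with $\partial_\lambda^\nu$ shows that $(h, v^{<\nu>}(\cdot, \lambda))_{\mathcal H}$ and $(h, g^{<\nu>}(\cdot, \lambda))_{\mathcal H}$ are precisely the $\nu$-th Taylor coefficients at $\lambda$ of $\Phi$ and $\Psi$, respectively.

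Unpacking the hypothesis, the $v_0 = [0,1]$ equation reads $H_2(0) = 0$, and the remaining equations translate to $\Phi$ vanishing to order at least $m_{\lambda, n}$ at each $\lambda_n$; at $\lambda_0 = 0$ the equality $\Phi(0) = f_2(0) H_2(0) = 0$ comes for free from $H_2(0) = 0$. The goal thus reduces to proving the analogous vanishing of $\Psi$, because $(h, g_0)_{\mathcal H} = H_2(0) = 0$ is already in hand. The bridge between $\Phi$ and $\Psi$ is the pair of identities
\[
\eta_2 \Phi + f_2 \Psi = \lambda \Delta H_1, \qquad \eta_1 \Phi - f_1 \Psi = \Delta H_2,
\]
obtained by direct elimination using $\Delta = f_1 \eta_2 + f_2 \eta_1$.

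At any $\lambda_n$ with $n \ge 1$, condition (S) lets me divide by whichever of $f_1, f_2$ is nonzero there; since $\Delta$ vanishes to order $\ge m_{\lambda, n}$ at $\lambda_n$ by definition of a subspectrum and $\Phi$ vanishes to the same order by hypothesis, $\Psi$ inherits this vanishing. The genuinely delicate point is $\lambda_0 = 0$: here $\Delta$ is guaranteed to vanish only to order $m_{\lambda, 0} - 1$, since $\lambda_0 = 0$ was added to the subspectrum artificially. The missing order is recovered either from the explicit factor $\lambda$ in the first identity (if $f_2(0) \ne 0$) or from the extra zero $H_2(0) = 0$ in the second identity (if $f_1(0) \ne 0$); in both subcases $\Psi$ vanishes to order $\ge m_{\lambda, 0}$ at $0$, which produces the required orthogonalities for $g_1, \ldots, g_{m_{\lambda,0}-1}$.

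I expect the main obstacle to be precisely this multiplicity count at $\lambda = 0$: it is essential that $v_0$ and $g_0$ are declared to equal the fixed vector $[0, 1]$ rather than the (possibly degenerate) values $v(\cdot, 0)$ and $g(\cdot, 0)$, because this alternative definition is what yields $H_2(0) = 0$, and that identity is exactly what rescues the $f_1(0) \ne 0$, $f_2(0) = 0$ case via the second bridge identity. Everything else is a routine analytic bookkeeping with the two identities plus condition (S).
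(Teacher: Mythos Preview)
Your argument is correct. The two bridge identities $\eta_2\Phi+f_2\Psi=\lambda\Delta H_1$ and $\eta_1\Phi-f_1\Psi=\Delta H_2$ are exactly right, and your treatment of $\lambda_0=0$ is the crux: when $m_{\lambda,0}=1$ only $(h,g_0)=H_2(0)=0$ is required, and when $m_{\lambda,0}\ge 2$ some $\lambda_n$ with $n\ge 1$ equals $0$, so condition~(S) does apply there and your two subcases cover everything. You might make this last observation explicit, since~(S) is stated only for $n\in\mathbb N$.

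The paper does not prove the lemma in place but defers to Lemma~3.2 of \cite{OpenMath}. Judging from the relations quoted later from \cite[Lemma~3.1]{OpenMath},
\[
\eta_j^{<\nu>}(\lambda_n)=(-1)^{j-1}\sum_{k=0}^{\nu}C_{n,k}\,f_j^{<\nu-k>}(\lambda_n),\qquad C_{n,0}\ne 0,
\]
that route establishes a block-triangular change of basis expressing each $g_{n+\nu}$ as a finite combination of $v_n,\dots,v_{n+\nu}$, whence completeness transfers directly. Your duality formulation via $\Phi$ and $\Psi$ is the same mechanism viewed through the orthogonal complement; it is a bit slicker because it never requires writing the coefficients $C_{n,k}$ explicitly, whereas the triangular-relation version has the advantage of giving the stronger two-sided equivalence (and the explicit constants used in part~(ii) of Theorem~\ref{completeness}) for free.
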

Lemma~\ref{completeEquiv} is proved analogously to Lemma~3.2  in~\cite{OpenMath}.
 
\begin{statement} \label{Paley-Wiener}
Let $G(\lambda)$ be an entire function such that
$$|G(\lambda)| \le C \exp(|{\mathrm Im}\, \lambda|2\pi), \quad G \in L_2(\mathbb R),$$
$$G^{<\nu>}(\lambda_n) = 0, \quad n \in \mathbb{S}_\lambda, \; \nu = \overline{0, m_{\lambda,n}-1}.$$
If condition~(C2) holds, then $G(\lambda) \equiv 0.$
\end{statement}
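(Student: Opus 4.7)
The plan is to apply the Paley--Wiener theorem to $G(\lambda)$ in order to convert the hypotheses on its zeros into an orthogonality condition in $L_2(-2\pi,2\pi)$, and then invoke condition~(C2) directly.

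First, I would observe that the growth bound $|G(\lambda)| \le C\exp(2\pi|\operatorname{Im}\lambda|)$ together with $G\in L_2(\mathbb R)$ places $G$ in the Paley--Wiener space $PW_{2\pi}$. Hence there exists $h\in L_2(-2\pi,2\pi)$ such that
$$G(\lambda) = \int_{-2\pi}^{2\pi} h(t)\, e(t,\lambda)\, dt = \int_{-2\pi}^{2\pi} h(t)\, e^{i\lambda t}\, dt,$$
and, moreover, the map $h\mapsto G$ is an isometry (up to a constant) between $L_2(-2\pi,2\pi)$ and $PW_{2\pi}$, so in particular $G\equiv 0$ is equivalent to $h\equiv 0$ in $L_2(-2\pi,2\pi)$.

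Next, since the integrand is entire in $\lambda$ with an $L_1$ domination on $[-2\pi,2\pi]$ after differentiating $\nu$ times under the integral (the factor $(it)^\nu/\nu!$ is bounded on this interval), differentiation commutes with integration and we obtain
$$G^{<\nu>}(\lambda) = \int_{-2\pi}^{2\pi} h(t)\, e^{<\nu>}(t,\lambda)\, dt.$$
The assumption $G^{<\nu>}(\lambda_n)=0$ for $n\in\mathbb S_\lambda$, $\nu=\overline{0,m_{\lambda,n}-1}$, thus translates into
$$\int_{-2\pi}^{2\pi} h(t)\, e^{<\nu>}(t,\lambda_n)\, dt = 0, \quad n\in\mathbb S_\lambda,\ \nu=\overline{0,m_{\lambda,n}-1},$$
which asserts that $h$ is orthogonal in $L_2(-2\pi,2\pi)$ to every element of the functional system $\{e^{<\nu>}(t,\lambda_n)\}$.

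By condition~(C2), this system is complete in $L_2(-2\pi,2\pi)$, so $h=0$ a.e.\ on $(-2\pi,2\pi)$, and consequently $G\equiv 0$ on $\mathbb C$. The only mildly delicate step is the Paley--Wiener identification together with the justification of differentiation under the integral, but both are standard and the conclusion follows immediately from completeness.
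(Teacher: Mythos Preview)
Your proof is correct and follows essentially the same route as the paper: apply the Paley--Wiener theorem to write $G$ as the Fourier transform of some $\Phi\in L_2(-2\pi,2\pi)$, differentiate under the integral to turn the vanishing conditions into $\int_{-2\pi}^{2\pi}\Phi(t)\,e^{<\nu>}(t,\lambda_n)\,dt=0$, and invoke~(C2) to conclude $\Phi\equiv 0$. Your write-up is slightly more careful in justifying differentiation under the integral and noting the injectivity of the Paley--Wiener transform, but the argument is the same.
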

\begin{proof}
By the Paley-Wiener theorem,
$$G(\lambda) = \int_{-2\pi}^{2\pi} \Phi(u) \exp(i \lambda u) \, du =  \int_{-2\pi}^{2\pi} \Phi(u)  e(u, \lambda) \, du,$$
where $\Phi \in L_2(-2\pi, 2\pi).$ This yields 
$$G^{<\nu>}(\lambda_n) = \int_{-2\pi}^{2\pi} \Phi(u) e^{<\nu>}(u, \lambda_n) \,du = 0, \quad n \in \mathbb{S}_\lambda, \; \nu = \overline{0, m_{\lambda,n}-1}.$$
Since the system $\{ e^{<\nu>}(t, \lambda_n)\}_{n\in \mathbb{S}_\lambda, v=\overline{0, m_{\lambda,n}-1}}$ is complete, we have $\Phi \equiv 0,$ and $G \equiv 0.$
\end{proof}

\begin{lemma} \label{g^0}
Denote 
$$g^0(x, \lambda) =  \big[ \exp(i \lambda x)\sin \pi(\lambda - \omega_0),  -\exp(i \lambda x)\cos \pi(\lambda - \omega_0) \big].$$
Let a sequence of distinct numbers $\{ \tau_n \}_{n \ge 0}$ be such that $\{ \exp(i \tau_n x)\}_{n \ge 0}$ is a Riesz basis in $L_2(-2 \pi, 2\pi).$ 
Then, $g^0_n(x) := g^0(x, \tau_n),$ $n\ge0,$ constitute a Riesz basis in $\mathcal H.$
\end{lemma}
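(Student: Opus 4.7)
The plan is to exhibit a bounded invertible linear operator $W \colon L_2(-2\pi,2\pi) \to \mathcal H$ satisfying $W(\exp(i\tau y)) = g^0(\cdot,\tau)$ for every $\tau \in \mathbb C$. Once such a $W$ is constructed, the conclusion is automatic: topological isomorphisms preserve the Riesz basis property, so $\{g^0_n\}_{n \ge 0} = \{W(\exp(i\tau_n y))\}_{n \ge 0}$ inherits this property from the hypothesis that $\{\exp(i\tau_n y)\}_{n \ge 0}$ is a Riesz basis in $L_2(-2\pi,2\pi)$.

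The definition of $W$ is prompted by the Euler expansions
$$
\sin\pi(\tau - \omega_0) = \frac{1}{2i}\bigl(e^{-i\pi\omega_0}e^{i\pi\tau} - e^{i\pi\omega_0}e^{-i\pi\tau}\bigr), \qquad
\cos\pi(\tau - \omega_0) = \frac{1}{2}\bigl(e^{-i\pi\omega_0}e^{i\pi\tau} + e^{i\pi\omega_0}e^{-i\pi\tau}\bigr),
$$
which absorb the factor $e^{i\tau x}$ in $g^0(x,\tau)$ into the shifted exponentials $e^{i\tau(x\pm\pi)}$. Since for $x \in (-\pi,\pi)$ the points $x - \pi$ and $x + \pi$ lie in the disjoint halves $(-2\pi,0)$ and $(0,2\pi)$ of $(-2\pi,2\pi)$, I would set, for $\psi \in L_2(-2\pi,2\pi)$,
$$
(W\psi)(x) := A\begin{pmatrix}\psi(x - \pi) \\ \psi(x + \pi)\end{pmatrix}, \qquad
A := \frac{1}{2}\begin{pmatrix} ie^{i\pi\omega_0} & -ie^{-i\pi\omega_0} \\ -e^{i\pi\omega_0} & -e^{-i\pi\omega_0}\end{pmatrix}.
$$
A short Euler-formula computation then confirms the key identity $W(\exp(i\tau y)) = g^0(\cdot,\tau)$ for every $\tau \in \mathbb C$.

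It remains to check that $W$ is a topological isomorphism, and for this I would factor it as a composition of three maps: the restriction $\psi \mapsto (\psi|_{(-2\pi,0)},\, \psi|_{(0,2\pi)})$, which is an isometric isomorphism from $L_2(-2\pi,2\pi)$ onto $L_2(-2\pi,0) \oplus L_2(0,2\pi)$; the pair of translations $\phi_1(x) := \psi_1(x-\pi)$, $\phi_2(x) := \psi_2(x+\pi)$, identifying the latter space isometrically with $\mathcal H$; and finally pointwise action of the constant matrix $A$ on $\mathcal H$, which is invertible because $\det A = -i/2 \neq 0$. Composing three topological isomorphisms yields the required property of $W$.

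No serious analytic difficulty arises in this plan; the one step that actually requires an idea is recognising that the pair $\bigl(\sin\pi(\tau-\omega_0),\,-\cos\pi(\tau-\omega_0)\bigr)$ multiplied by $e^{i\tau x}$ can be unbundled into a $\tau$-independent matrix $A$ applied to the vector $\bigl(e^{i\tau(x-\pi)},\, e^{i\tau(x+\pi)}\bigr)$. Once this structural observation is made, the rest is routine Euler algebra and a $2 \times 2$ determinant check, and the hypotheses on $\{\tau_n\}$ do all the analytic work for us.
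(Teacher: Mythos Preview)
Your argument is correct and is in fact cleaner than the paper's own proof. The paper proceeds in two stages: first it proves completeness of $\{g^0_n\}$ by a Paley--Wiener argument (constructing an entire function $G$ that vanishes on $\{\tau_n\}$ and invoking the completeness of the exponentials), and then it verifies the two-sided Riesz inequality separately by computing the Gram entries and showing that $\|\sum b_n g^0_n\|_{\mathcal H} = \tfrac{1}{\sqrt2}\|\sum b_n e^{i\tau_n t}\|_{L_2(-2\pi,2\pi)}$; finally it reduces general $\omega_0$ to $\omega_0=0$ by a shift. Your construction of the explicit isomorphism $W$ packages all three steps into one: the splitting $(-2\pi,2\pi)=(-2\pi,0)\cup(0,2\pi)$ followed by translation and the constant invertible matrix $A$ gives completeness and the Riesz bounds simultaneously, and $\omega_0$ is handled without a separate reduction. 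In fact a short computation shows $A^*A=\tfrac12 I$, so your $W$ is $\tfrac{1}{\sqrt2}$ times a unitary, which recovers exactly the norm identity the paper establishes via the Gram-matrix calculation. The paper's route has the minor advantage of being self-contained with tools already set up (Proposition~\ref{Paley-Wiener}), while yours isolates the structural reason the lemma holds.
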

\begin{proof}
I. First, suppose $\omega_0=0,$ then 
$$g^0_n(t) = [\exp(i \tau_n t)\sin \pi \tau_n, -\exp(i \tau_n t)\cos \pi \tau_n], \quad n \ge 0.$$
Let us prove completeness of the system $\{ g_n^0\}_{n \ge  0}.$

Consider $h \in \mathcal H$ such that $(h, g_n^0)=0$ for all $n \ge 0.$ Introduce the function
\begin{equation} \label{G_0}
G(\lambda) = -\int_{-\pi}^\pi \big(\overline{h_1(t)}\sin \pi \lambda - \overline{h_2(t)}\cos \pi \lambda\big) \exp(i \lambda t) \,dt,
\end{equation}
which turns zero in $\lambda = \tau_n.$ The function $G(\lambda)$ and  the numbers $\lambda_n = \tau_n,$ $n \ge 0,$ satisfy the conditions of Proposition~\ref{Paley-Wiener}, and we get $G(\lambda) \equiv 0.$ Substituting $\lambda=n$ into~\eqref{G_0}, we obtain $h_2 \equiv 0,$ and, as a consequence, $h_1 \equiv 0.$ Thus, we have proved $h\equiv0,$ and the sequence $\{ g_n^0\}_{n \ge 0}$ is complete in $\mathcal H.$

Now, to prove $\{ g_n^0\}_{n \ge 0}$ is a Riesz basis, it is sufficient to establish the two-side inequality
\begin{equation} \label{Riesz}
M_1 \sum_{n=0}^{N_0} |b_n|^2 \le \left\| \sum_{n=0}^{N_0} b_n g_n^0  \right\|_{\mathcal H} \le M_2  \sum_{n=0}^{N_0} |b_n|^2
\end{equation}
 with some fixed constants $M_1$ and $M_2$ for any $N_0 \in \mathbb N,$ $b_n \in \mathbb C.$
For this purpose, we compute
$$(g_n^0, g_k^0)_{\mathcal H} = (\overline{\sin \pi \tau_n}\sin \pi \tau_k + \overline{\cos \pi \tau_n}\cos \pi \tau_k) \int_{-\pi}^\pi \overline{\exp (i \tau_n t)} \exp (i \tau_k t) \, dt.$$
Replacing $\sin z$ and $\cos z$ by combinations of exponential functions, after manipulations, we obtain
$$(g_n^0, g_k^0)_{\mathcal H} = \frac12 \big(\exp(i \tau_n t), \exp(i \tau_k t)\big)_{L_2(-2\pi, 2 \pi)}.$$ 
Hence, it follows that
$$\left\| \sum_{n=0}^{N_0} b_n g_n^0 \right\|_{\mathcal H} =  \frac{1}{\sqrt2}\left\| \sum_{n=0}^{N_0} b_n \exp(i \tau_n t) \right\|_{L_2(-2\pi, 2 \pi)}.$$
Since $\{ \exp(i \tau_n x)\}_{n \ge 0}$ is a Riesz basis, the latter yields~\eqref{Riesz}.

II. Consider the case of arbitrary $\omega_0.$ Introduce the sequences 
$$\alpha_n = \tau_n - \omega_0, \ \tilde g_n(x) = \big[ \exp(i \alpha_n x) \sin \pi \alpha_n, -  \exp(i \alpha_n x)\cos \pi \alpha_n \big], \quad n \ge 0.$$ 
By part I, if $\{ \exp(i \alpha_n t)\}_{n\ge0}$ is a Riesz basis, so is $\{ \tilde g_n\}_{n\ge0}.$
Note that multiplication by the function $\exp(-i \omega_0 t)$ does not change the Riesz basis property. By this reason, $\{ \exp(i \alpha_n t)\}_{n\ge0}$ is a Riesz basis, and, in turn, $g_n^0(t) = \exp(i \omega_0 t) \tilde g_n(t),$ $n \ge 0,$ constitute a Riesz basis.
\end{proof}

\begin{proof}[Proof of Theorem~\ref{completeness}] (i) Assume that (S) and (C2) hold.  Let us prove the completeness of $\{ g_n \}_{n=0}^\infty$ in $\mathcal H.$ 
Consider $h = [h_1, h_2] \in \mathcal H$ such that $(h, g_k)_{\mathcal H} = 0,$  $k \ge 0.$ Construct the function
$$G(\lambda) = \int^\pi_{-\pi} (h_1(t) \lambda \eta_1(\lambda)- h_2(t) \eta_2(\lambda)) e(t, \lambda)\, dt.$$ 
Then, $G(\lambda)$ satisfies the conditions of Proposition~\ref{Paley-Wiener}, and $G(\lambda) \equiv 0.$ 

Substituting $\lambda=\theta_n$ into $G(\lambda),$ after differentiation,  we obtain 
$$\sum_{j=0}^\nu \int_{-\pi}^\pi h_2(t) \eta^{<j>}_2(\theta_n) e^{<\nu-j>}_2(t, \theta_n) \, dt = 0, \quad n \in \mathbb S_\theta, \, \nu=\overline{0, m_{\theta,n}-1}.$$
By induction, since $\eta_2(\theta_n) \ne 0,$ we get
$$\int_{-\pi}^\pi e^{<\nu>}(t, \theta_n) h_2(t) \, dt = 0,  \quad  n \in \mathbb S_\theta, \, \nu=\overline{0, m_{\theta,n}-1}. $$
Using Proposition~\ref{e basis}, we arrive at $h_2 \equiv 0$ and, hence, $h_1 \equiv 0.$ These identities mean the completeness of $\{ g_n\}_{n=0}^\infty$ in $\mathcal H.$ 
By Lemma~\ref{completeEquiv}, we have proved (C).

(ii) Now, we prove that (S), (A), and (B2) yield (B). From formula~\eqref{KN} it follows that
$$g(t, \lambda) = g^0(t, \lambda) + \exp(i \lambda t)\int_{-\pi}^\pi \exp(i \lambda\xi) f(\xi) \, d\xi, \quad f = [f_1, f_2] \in \mathcal H.$$
Referring to conditions~(A), we have $|\exp (i \lambda_n t)|=O(1),$ $n \ge n_0.$ 
Since $\{ \exp (i \lambda_n \xi)\}_{n \ge n_0}$ is a part of the Riesz basis in (B2), for $j=0,1,$ we have $\{ \int_{-\pi}^\pi \exp(i \lambda_n\xi) f_j(\xi) \, d\xi \}_{n \ge n_0} \in \ell_2.$ Combining these formulae, we obtain
\begin{equation} \label{quad+}
\left\{ \left\|  \exp(i \lambda_n t)\int_{-\pi}^\pi \exp(i \lambda_n\xi) f(\xi) \, d\xi \right\|_{\mathcal H}\right\}_{n \ge n_0} \in \ell_2.
\end{equation}

Put $\tau_n=\lambda_n$ for $n\ge n_0,$ while for the other $n \in \mathbb N,$ the numbers $\tau_n$ are chosen such that $\tau_n \ne \tau_k$ for all $n\ne k.$   
Then, the system $\{ \exp(i \tau_n t)\}_{n\ge 0}$ differs only by a finite number of elements from the Riesz basis in (B2). It is easy to see that this system is complete, see corollary from Theorem~2 in~\cite[Appendix III]{levin}.
By~\cite[Proposition~1.8.5]{novabook},  the system $\{ \exp(i \tau_n t)\}_{n\ge 0}$ is a Riesz basis as a complete sequence being quadratically close to a Riesz basis. 
Then, the system $\{ g_n^0 \}_{n\ge 0}$ constructed in Lemma~\ref{g^0} is also a Riesz basis.

In part (i) of the theorem we proved the completeness of the system $\{ g_n\}_{n \ge 0}.$ 
From formula~\eqref{quad+} it follows that $\{ g_n\}_{n \ge 0}$ is quadratically close to $\{ g_n^0 \}_{n\ge 0}.$
Then, the system $\{ g_n\}_{n \ge 0}$ is a Riesz basis in $\mathcal H.$

 In~\cite[Lemma~3.1]{OpenMath}, under condition~(S), the following relations were obtained:
 \begin{equation*}
\eta_j^{<\nu>}(\lambda_n) = (-1)^{j-1} \sum_{k=0}^\nu C_{n,k} f_j^{<\nu>}(\lambda_n), \quad n \in {\mathbb S}_\lambda, \ \nu = \left\{\begin{array}{cc}\overline{0, m_{\lambda,n} - 1}, & n > 0,\\[2mm]
 \overline{0, m_{\lambda,n} - 2}, & n = 0,
 \end{array} \right.
\end{equation*}
where $j=1,2$ and $C_{n,0} \ne 0.$ By condition (A), for $n \ge n_0,$  we have 
 $$\eta_j(\lambda_n) = (-1)^{j-1} C_{n,0} f_j(\lambda_n), \ j=1,2,\quad  g_n = C_{n,0} v_n.$$
 Put $C_{n,0} = 1$ for $n \notin \mathbb S_\lambda$ and consider the functions $\tilde v_n=C_{n,0}^{-1} v_n,$ $n \ge 0.$ Obviously, the system $\{ \tilde v_n\}_{n \ge 0},$ being complete and quadratically close to $\{ g_n\}_{n \ge 0},$ is a Riesz basis. 
It is known that a Riesz basis is an unconditional, see~\cite[Theorem 2.2]{gohberg}. Then, $v_n = C_{n,0} \tilde v_n,$ $n \ge 0,$ constitute an unconditional basis.
\end{proof}
\section{Application to a partial inverse problem}
\label{application}
Let us apply the obtained results to studying the inverse problem of recovering the pencil on the half of an interval by one spectrum.
We study the boundary value problem
\begin{equation} \label{doubled interval}
-y'' + q(x) y + 2 \lambda p(x) y = \lambda^2 y, \quad x \in (0, 2 \pi),
\end{equation} 
\begin{equation} \label{dir2}
y(0) = y(2 \pi) = 0,
\end{equation}
where $p \in L_2(0, 2 \pi)$ and $q \in W^{-1}_2(0, 2\pi).$ Considering arbitrary $\sigma \in L_2(0, 2\pi)$ such that $q=\sigma',$ we rewrite~\eqref{doubled interval} in the form~\eqref{regularization}.
From~\eqref{asym1}, we have that  the eigenvalues $\{ \mu_k \}_{k \in \mathbb Z_0}$ of the boundary value problem~\eqref{doubled interval},~\eqref{dir2} satisfy the asymptotics
\begin{equation} \label{asym2}
\mu_k = \frac{k}{2} + \frac{1}{2\pi}\int_0^{2\pi} p(t) \,dt + \varkappa_k, \; k \in \mathbb Z_0, \quad \{\varkappa_k\}_{k \in \mathbb Z_0} \in \ell_2.
\end{equation}
Note that since the quasi-derivative is absent in boundary conditions~\eqref{dir2}, the eigenvalues $\{ \mu_k \}_{k \in \mathbb Z_0}$ do not depend on the choice of $\sigma \in L_2(0, 2\pi)$ up to a constant function.

We assume that the coefficients $p$ and $q$ are known on $(\pi, 2\pi).$
In~\cite{yang}, in the regular case of $p \in W^1_2(0, 2\pi)$ and $q \in L_2(0, 2\pi),$ the following inverse problem was studied:
\begin{ip} \label{partial ip}
Given $\{ \mu_k \}_{k \in \mathbb Z_0}$ along with $p$ and $q$ on $(\pi, 2\pi),$ 
recover the coefficients $p$ and $q$ on the interval $(0, \pi).$
\end{ip}

For the first time, Inverse Problem~\ref{partial ip} in the regular case was studied by Buterin~\cite{but-half}. He proved a uniqueness theorem  and obtained an algorithm for solving the inverse problem  for the boundary conditions possessing the spectral parameter. For the Dirichlet boundary conditions, the uniqueness theorem was proved by Yang and Zettl~\cite{yang}, but under an additional assumption of the spectrum simplicity. In the present paper, by reducing Inverse Problem~\ref{partial ip} to Inverse Problem~\ref{main ip}, we prove the uniqueness theorem in the singular case of coefficients $p\in L_2(0, 2 \pi)$ and $q  \in W^{-1}_2(0, 2\pi)$ without assuming the simplicity of the spectrum.

For $x \in (\pi, 2\pi),$ we take arbitrary $\sigma(x)$ such that $q=\sigma'$ and introduce the solution $\varphi(x, \lambda)$ of~\eqref{doubled interval} satisfying the initial conditions
$$\varphi(2 \pi, \lambda) =  0, \quad \varphi^{[1]}(2 \pi, \lambda) = 1.$$
A number $\lambda$ is an eigenvalue of the problem~\eqref{doubled interval},~\eqref{dir2}  if and only if  the functions $\varphi(x, \lambda)$ and $S(x, \lambda)$ are linearly dependent.  The latter holds whenever
$$\varphi^{[1]}(\pi, \lambda) S(\pi, \lambda) -  S^{[1]}(\pi, \lambda) \varphi(\pi, \lambda) = 0.$$
It is clear that $\varphi(\pi, \lambda)$ and $\varphi^{[1]}(\pi, \lambda)$ are known entire functions.
Then, the eigenvalues $\{ \mu_k \}_{k \in \mathbb Z_0}$ of~\eqref{doubled interval},~\eqref{dir2} coincide with the eigenvalues  of~\eqref{regularization},~\eqref{boundary conditions}, where we put $f_1(\lambda)=-\varphi(\pi, \lambda)$ and $f_2(\lambda) = \varphi^{[1]}(\pi, \lambda).$
This fact and the results from the previous section allow us to obtain a uniqueness theorem. We use the same agreement regarding the symbols $\alpha$ and $\tilde \alpha$ as it was done before Theorem~\ref{main result}.
\begin{thm} \label{theorem partial}
Let $q = \tilde q$ in $W^{-1}_2(\pi, 2\pi),$ $p=\tilde p$ in $L_2(\pi, 2\pi),$ and $\{ \mu_k \}_{k \in \mathbb Z_0} = \{ \tilde\mu_k \}_{k \in \mathbb Z_0}.$ Then, the identities $q \equiv \tilde q$ and $p \equiv \tilde p$ hold on $(0, 2 \pi).$
\end{thm}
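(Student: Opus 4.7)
The plan is to reduce Inverse Problem~\ref{partial ip} to Inverse Problem~\ref{main ip} on the subinterval $(0,\pi)$ using the setup already described just before the theorem, and then to invoke Theorem~\ref{main result} via Theorem~\ref{completeness}(i). First, I fix $\sigma \in L_2(0, 2\pi)$ with $\sigma' = q$ and choose $\tilde\sigma$ similarly so that $\sigma = \tilde\sigma$ on $(\pi, 2\pi)$ (adjusting by a constant, which leaves $\tilde q$ unchanged). Since $(p, \sigma)$ and $(\tilde p, \tilde\sigma)$ agree on $(\pi, 2\pi)$, the solution $\varphi(x, \lambda)$ of the initial value problem from $x = 2\pi$ coincides with $\tilde\varphi(x, \lambda)$ throughout $[\pi, 2\pi]$, so the entire functions $f_1(\lambda) = -\varphi(\pi, \lambda)$ and $f_2(\lambda) = \varphi^{[1]}(\pi, \lambda)$ coincide for both problems. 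The two reduced pencils on $(0, \pi)$ thus share the boundary condition~\eqref{boundary conditions}, and since their eigenvalues are precisely $\{\mu_k\}_{k \in \mathbb Z_0}$ and $\{\tilde\mu_k\}_{k \in \mathbb Z_0}$ respectively, the hypothesis furnishes a common subspectrum $\{\lambda_n\}_{n \ge 1}$ after reindexing.

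Next, I verify conditions (S) and (C2) so that Theorem~\ref{completeness}(i) yields (C). Condition (S) holds because $f_1(\mu_n) = f_2(\mu_n) = 0$ would give $\varphi(\pi, \mu_n) = \varphi^{[1]}(\pi, \mu_n) = 0$, whence uniqueness for the first-order system equivalent to~\eqref{regularization} would force $\varphi(\cdot, \mu_n) \equiv 0$ on $[\pi, 2\pi]$, contradicting $\varphi^{[1]}(2\pi, \mu_n) = 1$. For (C2) I use the density-doubling substitution $t = 2s$: by~\eqref{asym2}, the sequence $\{2\mu_k\}_{k \in \mathbb Z_0}$ satisfies an asymptotic of the form~\eqref{asym1} with parameter $\frac{1}{\pi}\int_0^{2\pi} p$ in place of $\omega_0$, so Proposition~\ref{e basis} produces a Riesz basis $\{e^{<\nu>}(s, 2\mu_n)\}$ in $L_2(-\pi, \pi)$; transporting back via $t = 2s$ yields completeness (indeed a Riesz basis, up to the harmless factors $2^\nu$) of $\{e^{<\nu>}(t, \mu_n)\}$ in $L_2(-2\pi, 2\pi)$.

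Finally, I recover $(\omega_0 \bmod 1)$, where $\omega_0 = \frac{1}{\pi}\int_0^\pi p$ is the average on the subinterval of interest. Since $\{\varkappa_k\} \in \ell_2$ forces $\varkappa_k \to 0$, the enumerated sequence $\{\mu_k\}$ determines $\frac{1}{2\pi}\int_0^{2\pi} p$ exactly via~\eqref{asym2}, hence $\int_0^{2\pi} p = \int_0^{2\pi} \tilde p$; subtracting the known equal contribution $\int_\pi^{2\pi} p = \int_\pi^{2\pi} \tilde p$ gives $\omega_0 = \tilde\omega_0$ exactly, so certainly $(\omega_0 \bmod 1) = (\tilde\omega_0 \bmod 1)$. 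All hypotheses of Theorem~\ref{main result} are therefore met on $(0, \pi)$, yielding $p \equiv \tilde p$ and $\sigma \equiv \tilde\sigma$ on $(0, \pi)$, hence $q \equiv \tilde q$ in $W^{-1}_2(0, \pi)$. Combined with the given equalities on $(\pi, 2\pi)$, this completes the proof. The only nontrivial step is the verification of (C2), which hinges on the exact matching of the density of $\{\mu_k\}$ (spacing $1/2$) with the critical density for completeness in $L_2(-2\pi, 2\pi)$; everything else is bookkeeping.
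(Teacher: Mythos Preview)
Your proof is correct and follows essentially the same route as the paper's: reduce to Inverse Problem~\ref{main ip} via the choice $f_1=-\varphi(\pi,\cdot)$, $f_2=\varphi^{[1]}(\pi,\cdot)$, verify (S) and (C2), and invoke Theorem~\ref{completeness}(i) together with Theorem~\ref{main result}. Your explicit alignment $\sigma=\tilde\sigma$ on $(\pi,2\pi)$ (so that $f_1,f_2$ genuinely coincide for both problems, which matters because $\varphi^{[1]}$ depends on $\sigma$, not only on $q$) and your rescaling $t=2s$ for (C2) are exactly what the paper means by ``extending the segment in Proposition~\ref{e basis} to $(-2\pi,2\pi)$''; you have simply made these steps more explicit.
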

\begin{proof} 
Putting $f_1(\lambda)=-\varphi(\pi, \lambda),$  $f_2(\lambda) = \varphi^{[1]}(\pi, \lambda),$ we consider two boundary value problems $L(p, \sigma)$ and $L(\tilde p, \tilde \sigma).$  Then, the sequences $\{ \lambda_k \}_{k \ge 1}=\{ \mu_k \}_{k \in \mathbb Z_0}$ and $\{ \tilde\lambda_k \}_{k \ge 1}=\{ \tilde\mu_k \}_{k \in \mathbb Z_0}$ are their spectra, respectively.

Taking into account formula~\eqref{asym2}, one can uniquely reconstruct $(\omega_0\bmod 1)$ by the spectrum $\{ \mu_k \}_{k \in \mathbb Z_0}$ along with the known mean value of $p(x)$ on $(\pi, 2\pi).$  
By the conditions of the theorem, the latter yields $(\omega_0\bmod 1)=(\tilde\omega_0\bmod 1).$
Let us prove that condition (C) holds. Then, the statement of the theorem will follow from Theorem~\ref{main ip}. 

The functions $\varphi(\pi, \lambda)$ and $\varphi^{[1]}(\pi, \lambda)$ can not turn into zero for the same $\lambda,$ otherwise $\varphi(\pi, \lambda)$ would be a trivial solution.
Thus, condition (S) is fulfilled. By extending the segment in Proposition~\ref{e basis} to $(-2\pi, 2 \pi)$ and substituting $ \{\theta_n \}_{n \in \mathbb Z_0} = \{ \mu_n \}_{n \in \mathbb Z_0},$  we assert condition $(B2),$ which is stronger than (C2). By virtue of Theorem~\ref{completeness}, condition (C) holds.
\end{proof}
In the proof, for the corresponding boundary value problem~\eqref{regularization},~\eqref{boundary conditions} and its spectrum $\{ \lambda_n \}_{n\ge1},$  we obtained (S) and (B2). Condition (A) obviously follows from~\eqref{asym2}. Then, by Theorem~\ref{completeness}, condition~(B) holds, and we can apply Algorithm~\ref{algorithm} for recovery of $p$ and $q$ on~$(0, \pi).$


\begin{remark} Analogously one can study other types of boundary conditions and other partial inverse problems, including the ones from~\cite{bon-graph,but-half,wang-bon,wang,bon-phys,yang}.
\end{remark}

{\bf Aknowledgement.} This work was financially supported by project no.~21-71-10001 of the Russian Science Foundation, https://rscf.ru/en/project/21-71-10001/.


\begin{thebibliography}{9}
\bibitem{gesz} Albeverio S., Gesztesy F., Hoegh-Krohn R., Holden H. Solvable Models in Quantum Mechanics. 2nd revised ed. AMS Chelsea Publishing, Providence, RI, 2005.

\bibitem{myk} Hryniv R.O., Mykytyuk Y.V. Inverse spectral problems for Sturm-Liouville operators with singular potentials, Inverse Problems 19 (2003), no. 3, 665--684.
https://doi.org/10.1088/0266-5611/19/3/312

\bibitem{sav} Savchuk A.M., Shkalikov A.A. Inverse problem for Sturm-Liouville operators with distribution potentials: Reconstruction from two spectra, Russ. J. Math. Phys. 12 (2005), no. 4, 507--514.

\bibitem{higher} Bondarenko N.P. Inverse Spectral Problems for Arbitrary-Order Differential Operators with Distribution Coefficients. Mathematics 9 (2021), no. 22, 2989. https://doi.org/10.3390/math9222989 

\bibitem{hryn1} Hryniv R., Pronska N. Inverse spectral problems for energy-dependent Sturm-Liouville
equations, Inverse Problems 28 (2012), 085008 (21 pp).
https://doi.org/10.1088/0266-5611/28/8/085008

\bibitem{pron2} Pronska N. Reconstruction of energy-dependent Sturm-Liouville equations from two spectra, Int. Eqns. Oper. Theory 76 (2013), no. 3, 403--419.
https://doi.org/10.15330/cmp.5.2.315-325 

\bibitem{manko} Hryniv R.O., Manko S.S. Inverse scattering on the half-line for energy-dependent Schrödinger equations, Inverse Problems 36 (2020), no. 9,  article 095002.
https://doi.org/10.1088/1361-6420/aba416

\bibitem{bon-gaidel} Bondarenko N.P., Gaidel A.V. Solvability and stability of the inverse problem for the quadratic
differential pencil, Mathematics 9 (2021), no. 20, article 2617. https://doi.org/10.3390/math9202617  (27 pp.)

\bibitem{ignatiev} Freiling G., Ignatiev M.Y., Yurko V. A. An inverse spectral problem for Sturm--Liouville operators with singular potentials on star-type graph, Proc. Symp. Pure Math. 77 (2008), 397--408.
https://doi.org/10.1090/pspum/077 

\bibitem{eckhardt} Eckhardt J., Gesztesy F., Nichols R., Teschl G. Supersymmetry and Schrödinger-type operators with distributional matrix-valued
potentials. J. Spectr. Theory 4 (2014), issue 4, 715--768. 
https://doi.org/10.4171/JST/84

\bibitem{guliev} Guliyev N.J. Schrödinger operators with distributional potentials and boundary conditions dependent on the eigenvalue
parameter. J. Math. Phys. 60 (2019), issue 6, 063501. 
https://doi.org/10.1063/1.5048692

\bibitem{savchuk} Savchuk  A.M., Shkalikov A.A. Sturm-liouville operators with singular potentials. Math Notes 66 (1999), no. 6, 741--753. https://doi.org/10.1007/BF02674332

\bibitem{pfaff} Pfaff  R. Gewohnliche lineare differentialgleichungen zweiter ordnung mit Distributionskoeffizient. Arch. Math. 32 (1979), no. 5, 469--478.

\bibitem{march} Marchenko V.A. Sturm-Liouville Operators and Their Applications, Naukova Dumka, Kiev, 1977 (Russian); English transl., Birkhauser, 1986.

\bibitem{levitan} Levitan B.M. Inverse Sturm-Liouville Problems, Nauka, Moscow, 1984 (Russian); English transl., VNU Sci. Press, Utrecht, 1987.

\bibitem{poschel} Pöschel J.; Trubowitz E. Inverse Spectral Theory, New York, Academic Press, 1987.

\bibitem{novabook} Freiling G., Yurko V.A. Inverse Sturm--Liouville Problems and Their Applications, NOVA Science Publishers, New York, 2001.

\bibitem{jaulient} Jaulent M., Jean C. The inverse s-wave scattering problem for a class of potentials depending on energy. Comm. Math. Phys. 28 (1972), 177--220.
https://doi.org/10.1007/BF01645775

\bibitem{yama} Yamamoto M. Inverse eigenvalue problem for a vibration of a string with viscous drag. J. Math. Anal. Appl. 152 (1990), iss.~1, 20--34.
https://doi.org/10.1016/0022-247X(90)90090-3

\bibitem{gas-gus} Gasymov M.G., Guseinov G.Sh. Determination of diffusion operator from spectral data,
Akad. Nauk Azerb. SSR. Dokl. 37 (1981), 19--23 (Russian).

\bibitem{but1} Buterin S.A., Yurko V.A. Inverse spectral problem for pencils of differential operators on
a finite interval, Vestnik Bashkir. Univ. 11 (2006), no. 4, 8--12 (Russian).

\bibitem{nabiev} Nabiev  I.M., Shukurov A.Sh. Solution of inverse problem for the diffusion operator in a symmetric case, Izv. Saratov. Univ. Ser. Mat. Mekh. Inf. 9(1)(2009), no. 4 , 36--40 (in Russian).  https://doi.org/10.18500/1816-9791-2009-9-4-1-36-40

\bibitem{ahtyamov} Akhtyamov A.M., Sadovnichy V.A., Sultanaev Ya.T. Inverse problem for the diffusion operator with symmetric functions and general boundary conditions, Eurasian Math.J. 8 (2017), no. 1, 10--22.

\bibitem{gus-nab} Guseinov I. M., Nabiev I. M. The inverse spectral problem for pencils of differential
operators, Sb. Math. 198 (2007), no. 11, 1579--1598.
http://dx.doi.org/10.1070/SM2007v198n11ABEH003897

\bibitem{but2} Buterin S.A., Yurko V.A. Inverse problems for second-order differential pencils with
Dirichlet boundary conditions, J. Inverse Ill-Posed Probl. 20 (2012), no. 5--6, 855--881.
https://doi.org/10.1515/jip-2012-0062

\bibitem{gulsen} Gulsen T., Panakhov E.S. On the isospectrality of the scalar energy-dependent
Schrödinger problems, Turkish J. Math. 42 (2018), no. 1, 139--154.


\bibitem{yang-yu} Yang Ch.F., Yu X.J. Determination of differential pencils with spectral parameter dependent boundary conditions from interior spectral data, Math. Meth. Appl. Sci. 37 (2013), no. 6, 860--869.  https://doi.org/10.1002/mma.2844

\bibitem{nabiev-azerb} Ibadzadeh Ch.G., Mammadova L.I., Nabiev I.M. Inverse Problem of Spectral Analysis for Diffusion
Operator with Nonseparated Boundary Conditions
and Spectral Parameter in Boundary Condition, Azerbaijan Journal of Mathematics 9 (2019), no. 1, 171--189.

\bibitem{freil} Freiling G., Yurko V. Determination of Singular Differential Pencils from the Weyl Function, Advances in Dynamical Systems and Applications 7 (2012), no. 2, 171--193. https://campus.mst.edu/adsa/contents/v7n2p3.pdf

\bibitem{tang-trans} Yang Ch.F. Uniqueness theorems for differential pencils with eigenparameter boundary
conditions and transmission conditions, J. Diff. Eqns. 255 (2013), no. 9, 2615--2635.
https://doi.org/10.1016/j.jde.2013.07.005


\bibitem{but-half} Buterin S.A. On half inverse problem for differential pencils with the spectral parameter in boundary conditions, Tamkang J. Math. 42(2011), no. 3., 355--364.  https://doi.org/10.5556/j.tkjm.42.2011.912 

\bibitem{khalili} Khalili Ya., Baleanu D. Recovering differential pencils with spectral
boundary conditions and spectral jump
conditions, Journal of Inequalities and Applications 2020 (2020), article 262 (13 pp).
https://doi.org/10.1186/s13660-020-02537-z

\bibitem{durak} Amirov R., Ergun A., Durak S. Half-inverse problems for the quadratic pencil of the
Sturm-Liouville equations with impulse, Num. Meth. PDE 37 (2021), no. 1, 915--924.
https://doi.org/10.1002/num.22559






\bibitem{wang} Wang Yu.P. The inverse spectral problem for differential pencils by mixed spectral data, Applied Mathematics and Computation 338 (2018), 544--551. https://doi.org/10.1016/j.amc.2018.06.052


 \bibitem{yang}  Yang Ch.-F.,  Zettl A. Half inverse problems for quadratic pencils of Sturm-Liouville operators, Taiwanese J. Math. 16 (2012), no. 5, 1829--1846. 
 https://doi.org/10.11650/twjm/1500406800 

\bibitem{bon-graph} Bondarenko N.P. A Partial Inverse Problem for the Differential Pencil on a Star-Shaped Graph, Results Math 72 (2017), 1933--1942. https://doi.org/10.1007/s00025-017-0683-7

\bibitem{wang-bon} Wang Yu.P.,  Bondarenko N.,  Shieh Ch.Ts. The inverse problem for differential pencils on a star-shaped graph with mixed spectral data, Sci China Math 63 (2020), 1559--1570. https://doi.org/10.1007/s11425-018-9485-3

\bibitem{bon-phys} Bondarenko N.P. Inverse problem for the differential pencil on an arbitrary graph with partial information given on the coefficients, 
Anal.Math.Phys. 9 (2019), 1393--1409.
https://doi.org/10.1007/s13324-018-0244-6

 \bibitem{shieh} Bondarenko N.P., Shieh Ch.Ts. Partial inverse problems for quadratic differential pencils on a graph with a loop, Journal of Inverse and Ill-posed Problems 28 (2020), no. 3, 449--463. https://doi.org/10.1515/jiip-2018-0104
 
 
 
 \bibitem{hoh} Hochstadt H. and Lieberman B. An inverse Sturm--Liouville problem with mixed given data, SIAM J. Appl. Math. 34 (1978), issue 4, 676--680.
https://doi.org/10.1137/0134054

\bibitem{OpenMath} Bondarenko N.P. Inverse Sturm-Liouville problem with analytical functions in the boundary condition, Open Mathematics 18 (2020), no. 1, 512--528. https://doi.org/10.1515/math-2020-0188

\bibitem{MethAppl}  Bondarenko N.P. Solvability and stability of the inverse Sturm-Liouville problem with analytical functions in the boundary condition. Math. Meth. Appl. Sci. 43 (2020), issue 11, 7009--7021. https://doi.org/10.1002/mma.6451 

\bibitem{sing-graph} Bondarenko N.P. A partial inverse Sturm-Liouville problem on an arbitrary graph, Mathematical Methods in the Applied Sciences 44 (2021), no.~8, 6896--6910.
https://doi.org/10.1002/mma.7231

\bibitem{horvat} Horvath M. Inverse spectral problems and closed exponential systems, Ann. of Math. 162 (2005), 885--918.
https://doi.org/10.4007/annals.2005.162.885 

\bibitem{torba} Kravchenko V.V., Torba S.M. A practical method for recovering Sturm-Liouville problems from the Weyl function, Inverse Problems 37 (2021), no.~6, 065011.


\bibitem{pronska} Pronska N.I. Asymptotics of eigenvalues and eigenfunctions of energy-dependent Sturm-Liouville equations, Mat. Stud. 40 (2013), no. 1, 38--52. http://matstud.org.ua/texts/2013/40\_1/38-52.html
\bibitem{b-stab} Buterin S.A. On the uniform stability of recovering sine-type functions with asymptotically separated zeros, Mat. Zametki 111 (2022), no. 3, 343--355 (in Russian); English transl.: Math. Notes 111 (2022), no. 3, 15--27. 
https://doi.org/10.1134/S0001434622030026

\bibitem{gohberg} Gohberg I. C. and Krein M.G., Introduction to the Theory of Linear Nonselfadjoint Operators in Hilbert Space, Translations of Mathematical Monographs 18, American Mathematical Society, Providence, RI, 1969.

\bibitem{levin} Levin B.Ja. Distribution of Zeros of Entire Functions. 2nd revised ed. American Mathematical Society, Providence, RI, 1980.

\end{thebibliography}
\end{document}